\theoremstyle{definition}
\newtheorem*{rem}{Remark}
\newtheorem{cor}{Corollary}
\theoremstyle{plain}
\newtheorem{thm}{Theorem}
\newtheorem{lem}[thm]{Lemma}
\newtheorem{prop}[thm]{Proposition}
\DeclareMathOperator{\tr}{tr}
\begin{document}

\title[$\mathcal{A}$-manifold over $\mathcal{A}$-manifold]{$\mathcal{A}$-manifolds on a principal torus bundle over an $\mathcal{A}$-manifold base}
\author[G. Zborowski]{Grzegorz Zborowski}
\address{Cracow University of Technology\\ Warszawska 24\\ 31-155 Krak\'ow, Poland}
\address{University of Maria Curie-Sk\l odowska\\ Pl. Marii Curie-Sk\l odowskiej 5, 20-035 Lublin, Poland}
\email{gzborowski@pk.edu.pl}
\subjclass[2000]{Primary 53C25}

\begin{abstract}
We construct new examples of manifolds with cyclic-parallel Ricci tensor, so called $\mathcal{A}$-manifolds, on a $r$-torus bundle over a product of almost Hodge $\mathrm{A}$-manifolds.
\end{abstract}

\thanks{The author would like to thank prof. W. Jelonek.}

\maketitle

\section{Introduction}
One of the most extensively studied objects in mathematics and physics are Einstein manifolds (see for example \cite{besse}), i.e. manifolds whose Ricci tensor is a constant multiple of the metric tensor. In his work \cite{gray} A. Gray defined a condition which generalize the concept of an Einsten manifold. This condition states that the Ricci tensor $\mathrm{Ric}$ of the Riemannian manifold $(M,g)$ is cyclic parallel, i.e.
\begin{displaymath}
\nabla_X \mathrm{Ric}(Y,Z)+\nabla_Y \mathrm{Ric}(Z,X)+\nabla_Z \mathrm{Ric}(X,Y)=0,
\end{displaymath}
where $\nabla$ denotes the Levi-Civita connection of the metric $g$ and $X,Y,Z$ are arbitrary vector fields on $M$. A Riemannian manifold satisfying this condition is called an $\mathcal{A}$-manifold. It is obvious that if the Ricci tensor of $(M,g)$ is parallel, then it satisfies the above condition. On the other hand if $\mathrm{Ric}$ is cyclic-parallel, but not parallel then we call $(M,g)$ a strict $\mathcal{A}$-manifold. A. Gray gave in \cite{gray} first example of such strict $\mathcal{A}$-manifold, which was the sphere $S^3$ with appropriately defined homogeneous metric. A first example of a non-homogeneous $\mathcal{A}$-manifold was given in \cite{j1}. This example is a $S^1$-bundle over some K\"ahler-Einstein manifold. This result was generalized in \cite{j3} to K-contact manifolds. Namely, over every almost Hodge $\mathcal{A}$-manifold with $J$-invariant Ricci tensor we can construct a Riemannian metric such that the total space of the bundle is an $\mathcal{A}$-manifold. In the present paper we take a next step in the generalization process and we prove that there exist an $\mathcal{A}$-manifold structure on every $r$-torus bundle over product of almost Hodge $\mathcal{A}$-manifolds. Our result and that of Jelonek are based on the existence of almost Hodge $\mathcal{A}$-manifolds, which was proven in \cite{j2}.

\section{Conformal Killing tensors}

Let $(M,g)$ be any Riemannian manifold. We call a symmetric tensor field of type $(0,2)$ on $M$ a \textit{conformal Killing tensor} field iff there exists a $1$-form $P$ such that for any $X\in\Gamma(TM)$
\begin{equation}\label{Awar}
\nabla_XK(X,X) = P(X)g(X,X),
\end{equation}
where $\nabla$ is the Levi-Civita connection of $g$. The above condition is clearly equivalent to the following
\begin{equation}\label{cycAwar}
\mathcal{C}_{X,Y,Z}\nabla_X K(Y,Z) = \mathcal{C}_{X,Y,Z}P(X)g(Y,Z)
\end{equation}
for all $X,Y,Z\in\Gamma(TM)$ where $\mathcal{C}_{X,Y,Z}$ denotes the cyclic sum over $X,Y,Z$. It is easy to prove that the $1$-form $P$ is given by
\begin{equation*}
P(X) = \frac{1}{n+2}\left(2\mathrm{div} K(X)+d\tr K(X)\right),
\end{equation*}
where $X\in\Gamma(TM)$ and $\mathrm{div} S$ and $\tr S$ are the divergence and trace of the tensor field $S$ with respect to $g$.

If the $1$-form $P$ vanishes, then we call $K$ a \textit{Killing tensor}. Of particular interest in this work is a situation when the Ricci tensor of the metric $g$ is a Killing tensor. We call such a manifold an $\mathcal{A}$\textit{-manifold}. In the more general situation, when the Ricci tensor is a conformal tensor we call $(M,g)$ a $\mathcal{AC}^{\perp}$\textit{-manifold}.

We will use a following easy property of conformal Killing tensors.
\begin{prop}\label{killprod}
Suppose that $(M,g)$ is a Riemannian product of $(M_i,g_i)$, $i=1,2$. Moreover, let $K_i$ be conformal tensors on $(M_i,g_i)$. Then $K=K_1+K_2$ is a conformal tensor for $(M,g)$.
\end{prop}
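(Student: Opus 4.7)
The plan is to verify the defining identity \eqref{Awar} for $K = K_1 + K_2$ directly, exploiting the product structure of both the metric and the Levi-Civita connection. Because both sides of \eqref{Awar} are tensorial in $X$ and the identity is pointwise, it suffices to check it for $X = X_1 + X_2$ with each $X_i$ lifted from the factor $M_i$.

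I would first record two standard consequences of the Riemannian product structure. The off-diagonal Christoffel symbols of $g = g_1 + g_2$ vanish, so $\nabla^M_{X_i} Y_j = 0$ whenever $i \neq j$ for lifted vector fields. Moreover, each pulled-back tensor $\pi_i^* K_i$ is zero on any argument lying in the complementary factor, and a short Christoffel-symbol computation shows that this vanishing pattern is preserved under $\nabla^M$. Combining these two observations one obtains
\begin{equation*}
\nabla^M_X K(X,X) = (\nabla^{1}_{X_1} K_1)(X_1,X_1) + (\nabla^{2}_{X_2} K_2)(X_2,X_2).
\end{equation*}

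Applying the hypothesis that each $K_i$ is a conformal Killing tensor on $(M_i, g_i)$ with associated 1-form $P_i$ then rewrites the right-hand side as $P_1(X_1) g_1(X_1,X_1) + P_2(X_2) g_2(X_2,X_2)$. The natural candidate for the 1-form on $M$ is $P = \pi_1^* P_1 + \pi_2^* P_2$, and the final step is to match the preceding expression with $P(X) g(X,X)$, thereby verifying \eqref{Awar} for $K$ on $(M,g)$.

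The main technical point is the block-diagonal vanishing in the first step, where the product structure of $g$ (through the vanishing of mixed Christoffel symbols) is essentially used; the subsequent matching to $P(X) g(X,X)$ is then a direct bookkeeping computation involving how each $P_i$ interacts with the two partial metrics $g_i$, and this is where the content of the proposition really lies.
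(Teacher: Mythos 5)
Your reduction to lifted fields and the identity
\begin{equation*}
\nabla^M_X K(X,X) = (\nabla^{1}_{X_1} K_1)(X_1,X_1) + (\nabla^{2}_{X_2} K_2)(X_2,X_2)
= P_1(X_1)\,g_1(X_1,X_1) + P_2(X_2)\,g_2(X_2,X_2)
\end{equation*}
are correct. The gap is in the last step, the one you dismiss as bookkeeping: with your candidate $P=\pi_1^*P_1+\pi_2^*P_2$ the right-hand side of \eqref{Awar} is
\begin{equation*}
\bigl(P_1(X_1)+P_2(X_2)\bigr)\bigl(g_1(X_1,X_1)+g_2(X_2,X_2)\bigr),
\end{equation*}
which exceeds what you computed by the cross terms $P_1(X_1)g_2(X_2,X_2)+P_2(X_2)g_1(X_1,X_1)$, and these do not vanish in general. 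No other choice of $P$ rescues this: taking $X_2=0$ forces $P$ to agree with $\pi_1^*P_1$ on vectors tangent to the first factor (and symmetrically on the second), so $\pi_1^*P_1+\pi_2^*P_2$ is the only possible candidate; replacing $X_2$ by $tX_2$ and comparing coefficients of powers of $t$ then shows the cross terms vanish identically only if $P_1=0$ and $P_2=0$ (assuming both factors have positive dimension).

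What your computation actually establishes is that $K_1+K_2$ satisfies \eqref{Awar} on the product precisely when both $K_i$ are Killing tensors, in which case $K$ is again a Killing tensor. That is the only case in which the paper uses the proposition (in the proof of Theorem \ref{liftKill}), and it is consistent in spirit with the remark following that theorem about the impossibility of lifting conformal Killing tensors with nonvanishing $P$. So the obstruction you would hit in the ``matching'' step is not a repairable defect of your write-up but a genuine failure of the statement for strictly conformal (non-Killing) tensors; the safe version to prove, and the one the rest of the paper needs, is the Killing case, where your first displayed identity already finishes the argument because both sides vanish. (The paper states the proposition without proof, so there is no argument of the author's to compare against.)
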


A \textit{conformal Killing form} or a \textit{twistor form} is a differential $p$-form $\varphi$ on $(M,g)$ satisfying the following equation
\begin{equation}\label{confK}
\nabla_X\varphi = \frac{1}{p+1}X\righthalfcup d\varphi -\frac{1}{n-p+1}X\wedge\delta\varphi.
\end{equation}
An extensive description of conformal Killing forms can be found in a series of articles by Semmelmann and Moroianu (\cite{sem},\cite{s-m}). 

It is known that if $\varphi$ is a co-closed conformal Killing form (also called a \textit{Killing form}) then the $(0,2)$-tensor field $K_{\varphi}$ defined by
\begin{equation*}
K_{\varphi}(X,Y) = g(X\righthalfcup\varphi,Y\righthalfcup\varphi )
\end{equation*}
is a Killing tensor.

We can prove even more.
\begin{thm}\label{confKill}
Let $\varphi$ and $\psi$ be conformal Killing $p$-forms. Then the tensor field $K_{\varphi,\psi}$ defined by
\begin{equation*}
K_{\varphi,\psi} = g(X\righthalfcup\varphi,Y\righthalfcup\psi) + g(Y\righthalfcup\varphi,X\righthalfcup\psi)
\end{equation*}
is a conformal Killing tensor field.
\end{thm}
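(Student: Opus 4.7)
My plan is to verify \eqref{Awar} directly: the goal is to exhibit a $1$-form $P$ such that $(\nabla_X K_{\varphi,\psi})(X,X) = P(X)\,g(X,X)$. Since both sides are tensorial in $X$, I would fix a point $x\in M$ and extend $X_x$ to a local vector field with $\nabla X|_x = 0$; this collapses $(\nabla_X K_{\varphi,\psi})(X,X)|_x$ to $X\bigl(K_{\varphi,\psi}(X,X)\bigr)|_x$.

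First I would write $K_{\varphi,\psi}(X,X) = 2g(X\righthalfcup\varphi,\, X\righthalfcup\psi)$ and differentiate, obtaining
\begin{equation*}
(\nabla_X K_{\varphi,\psi})(X,X) = 2g(X\righthalfcup\nabla_X\varphi,\, X\righthalfcup\psi) + 2g(X\righthalfcup\varphi,\, X\righthalfcup\nabla_X\psi).
\end{equation*}
Into each term I would substitute the conformal Killing equation \eqref{confK}. After applying $X\righthalfcup$ the $d\varphi$-piece drops out thanks to $\iota_X^2=0$, while the $\delta\varphi$-piece is handled by the Leibniz identity
\begin{equation*}
X\righthalfcup(X\wedge\delta\varphi) = g(X,X)\,\delta\varphi - X\wedge(X\righthalfcup\delta\varphi).
\end{equation*}

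Next I would pair with $X\righthalfcup\psi$; the term $g\bigl(X\wedge(X\righthalfcup\delta\varphi),\, X\righthalfcup\psi\bigr)$ vanishes by the adjointness $g(X\wedge\alpha,\beta) = g(\alpha, X\righthalfcup\beta)$ combined once more with $\iota_X^2=0$. After the symmetric manipulation for the $\psi$-term, the result takes the form $g(X,X)\,P(X)$ with
\begin{equation*}
P(X) = -\frac{2}{n-p+1}\bigl[g(\delta\varphi,\, X\righthalfcup\psi) + g(X\righthalfcup\varphi,\, \delta\psi)\bigr].
\end{equation*}
Each summand is manifestly linear and tensorial in $X$, so $P$ is a bona fide $1$-form, and \eqref{Awar} follows. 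The only real obstacle is careful bookkeeping with $\iota_X^2=0$, the Leibniz rule for $\iota_X$, and the adjointness between $X\wedge\,\cdot\,$ and $X\righthalfcup\,\cdot\,$; as a sanity check, when $\varphi=\psi$ is co-closed one has $P=0$, recovering the Killing-tensor statement preceding the theorem.
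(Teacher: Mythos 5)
Your proposal is correct and follows essentially the same route as the paper: expand $(\nabla_X K_{\varphi,\psi})(X,X)$ via the Leibniz rule, substitute the twistor equation \eqref{confK}, kill the $d\varphi$-term with $\iota_X^2=0$ and the cross term with the adjointness of $X\wedge\cdot$ and $X\righthalfcup\cdot$, arriving at the same $1$-form $P$. The only cosmetic difference is that you normalize $\nabla X=0$ at a point while the paper carries the $\nabla_XX$ terms explicitly and cancels them.
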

\begin{proof}
Let $X$ be any vector field and $\varphi$, $\psi$ conformal Killing $p$-forms. We will check that $K_{\varphi,\psi}$ as defined above satisfies \eqref{Awar}.
\begin{gather*}
\nabla_XK_{\varphi,\psi}(X,X) = 2X\left(g(X\righthalfcup\varphi,X\righthalfcup\psi)\right)-2g(\nabla_XX\righthalfcup\varphi,X\righthalfcup\psi)\\
-2g(\nabla_XX\righthalfcup\psi,X\righthalfcup\varphi)\\
=2g(\nabla_X(X\righthalfcup\varphi),X\righthalfcup\psi)+2g(X\righthalfcup\varphi,\nabla_X(X\righthalfcup\psi))\\
-2g(\nabla_XX\righthalfcup\varphi,X\righthalfcup\psi)-2g(\nabla_XX\righthalfcup\psi,X\righthalfcup\varphi)\\
=2g(X\righthalfcup\nabla_X\varphi,X\righthalfcup\psi)+2g(X\righthalfcup\varphi,X\righthalfcup\nabla_X\psi).
\end{gather*}
From the fact that $\varphi$ satisfies \eqref{confK} we have
\begin{gather*}
g(X\righthalfcup\nabla_X\varphi,X\righthalfcup\psi) = \frac{1}{p+1}g(X\righthalfcup(X\righthalfcup d\varphi),X\righthalfcup\psi)\\
-\frac{1}{n-p+1}g(X\righthalfcup(X\wedge\delta\varphi),X\righthalfcup\psi)\\
=-\frac{1}{n-p+1}\left(g(X,X)g(\delta\varphi,X\righthalfcup\psi)-g(X\wedge(X\righthalfcup\delta\varphi),X\righthalfcup\psi)\right)\\
=-\frac{1}{n-p+1}g(X,X)g(\delta\varphi,X\righthalfcup\psi).
\end{gather*}
The same is valid for $\psi$ with
\begin{equation*}
g(X\righthalfcup\nabla_X\psi,X\righthalfcup\varphi) = -\frac{1}{n-p+1}g(X,X)g(\delta\psi,X\righthalfcup\varphi).
\end{equation*}
Hence we have
\begin{equation*}
\nabla_XK_{\varphi,\psi}(X,X) = -\frac{2}{n-p+1}g(X,X)\left(g(\delta\varphi,X\righthalfcup\psi)+g(\delta\psi,X\righthalfcup\varphi)\right)
\end{equation*}
\end{proof}

\section{Torus bundles}

Let $(M,h)$ be a Riemannian manifold and suppose that $\beta_i$ are closed $2$-forms on $M$ for $i=1,\ldots, r$ such that their cohomology classes $[\beta_i]$ are integral. In \cite{koba} it was proven that to each such cohomology class there corresponds a principal circle bundle $p_i : P_i\rightarrow M$ with a connection form $\theta_i$ such that 
\begin{equation}\label{cxform}
d\theta_i=2\pi p_i^*\beta_i.
\end{equation}
Taking the Whitney sum of bundles $(p_i,P_i,M)$ we obtain a principal $r$-torus bundle $p: P\rightarrow M$ classified by cohomology classes of $\beta_i$, $i=1,\ldots, r$. The connection form $\theta$ is a vector valued $1$-form with coefficients $\theta_i$, where $\theta_i$ are as before. For each connection form $\theta_i$ we define a vector field $\xi^i$ by $\theta_i(\xi^i)=1$. This vector field is just the fundamental vector field for $\theta_i$ corresponding to $1$ in the Lie algebra of $i$-th $S^1$-factor of the bundle $(p,P,M)$.

It is easy to check that the tensor field $g$ given by
\begin{equation}\label{metricg}
g = \sum_{i,j=1}^rb_{ij}\theta_i\otimes\theta_j + p^*h
\end{equation}
is a Riemannian metric on $P$ if $[b_{ij}]_{i,j=1}^r$ is some symmetric, positive definite $r\times r$ matrix with real coefficients. This Riemannian metric makes the projection $p: (P,g)\rightarrow (M,h)$ a Riemannian submersion (see \cite{oneill}).

\begin{lem}
Each vector field $\xi^i$ for $i=1,\ldots, r$ is Killing with respect to the metric $g$. Moreover, define a tensor field $T_i$ of type $(1,1)$ by $T_iX=\nabla_X\xi^i$ for $X\in\Gamma(TP)$, where $\nabla$ is the Levi-Civita connection of $g$. Then we have
\begin{equation*}
T_i\xi^j=0,\quad L_{\xi^i}T_j=0,
\end{equation*}
for $i\neq j$.
\end{lem}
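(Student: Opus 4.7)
The plan is to exploit two facts in turn: that $\xi^i$ generates the right action of the $i$-th $S^1$-factor of the structure group, and that the torus is abelian (so the fundamental vector fields commute and the connection forms are invariant).

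\textbf{Killing property.} First I would show $L_{\xi^i}g=0$ by inspecting the three pieces of the decomposition \eqref{metricg}. Since $\xi^i$ is vertical, $p_*\xi^i=0$, so the pullback part $p^*h$ is preserved (the torus action covers the identity on $M$). For the terms $\theta_k\otimes\theta_\ell$ it suffices to prove $L_{\xi^i}\theta_k=0$; using Cartan's formula, $L_{\xi^i}\theta_k = d(\theta_k(\xi^i))+\xi^i\righthalfcup d\theta_k = d(\delta^k_i)+2\pi\,\xi^i\righthalfcup p^*\beta_k$, and the second term vanishes because $p^*\beta_k$ is horizontal while $\xi^i$ is vertical. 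Since the coefficients $b_{ij}$ are constants, this yields $L_{\xi^i}g=0$.

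\textbf{Computing $T_i\xi^j$.} Once $\xi^i$ is Killing, its metric dual $\alpha^i:=g(\xi^i,\cdot)$ satisfies $g(\nabla_XV,Y)=\tfrac12 d\alpha^i(X,Y)$. Reading off from \eqref{metricg}, the dual is $\alpha^i=\sum_{j}b_{ij}\theta_j$, so by \eqref{cxform}
\begin{equation*}
d\alpha^i = 2\pi\sum_{j}b_{ij}\,p^*\beta_j,
\end{equation*}
i.e.\ $g(T_iX,Y)=\pi\sum_{j}b_{ij}\,p^*\beta_j(X,Y)$. Plugging in $X=\xi^j$ gives $0$ because each $p^*\beta_k$ annihilates vertical vectors (as $p_*\xi^j=0$). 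Hence $T_i\xi^j=0$ for all $i,j$, which in particular covers the case $i\neq j$.

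\textbf{Vanishing of $L_{\xi^i}T_j$.} The point is that an infinitesimal isometry preserves the Levi-Civita connection, so $L_{\xi^i}$ commutes with $\nabla$ when applied to tensor fields. Applied to the $(1,1)$-tensor $T_j = \nabla\xi^j$, this gives
\begin{equation*}
L_{\xi^i}T_j \;=\; L_{\xi^i}(\nabla\xi^j) \;=\; \nabla(L_{\xi^i}\xi^j) \;=\; \nabla[\xi^i,\xi^j].
\end{equation*}
Because $\xi^i$ and $\xi^j$ are the fundamental vector fields of an abelian structure group, $[\xi^i,\xi^j]=0$, and so $L_{\xi^i}T_j=0$.

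\textbf{Where the care is needed.} The only nontrivial obstacle is justifying the commutation $L_{\xi^i}\nabla = \nabla L_{\xi^i}$ cleanly; I would either cite it as a standard consequence of $L_{\xi^i}g=0$ (hence $L_{\xi^i}$ preserves the unique Levi-Civita connection), or, if the reader prefers an explicit computation, expand $(L_{\xi^i}T_j)X = [\xi^i,\nabla_X\xi^j]-\nabla_{[\xi^i,X]}\xi^j$ and use the Killing identity $\nabla_X\nabla_Y\xi^i-\nabla_{\nabla_XY}\xi^i = R(X,\xi^i)Y$ together with $[\xi^i,\xi^j]=0$ to collapse the expression. Everything else is routine pullback and bundle bookkeeping.
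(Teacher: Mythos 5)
Your proof is correct, and two of its three parts coincide with the paper's argument: the Killing property is established via Cartan's formula exactly as in the paper (you shortcut the paper's case analysis on horizontal versus vertical $X$ by invoking the horizontality of $d\theta_k=2\pi p^*\beta_k$ directly, which is cleaner and legitimate), and the vanishing of $L_{\xi^i}T_j$ rests in both treatments on the same key fact that a Killing field's Lie derivative commutes with the Levi--Civita connection, combined with $[\xi^i,\xi^j]=0$. Where you genuinely diverge is in proving $T_i\xi^j=0$: the paper polarizes the constancy of $g(\xi^i,\xi^j)$, uses the Killing skew-symmetry to rewrite both terms as $-g(X,\nabla_{\xi^j}\xi^i)-g(\nabla_{\xi^i}\xi^j,X)$, and then invokes torsion-freeness plus $[\xi^i,\xi^j]=0$ to identify the two covariant derivatives; you instead use the identity $2g(\nabla_X\xi^i,Y)=d\alpha^i(X,Y)$ for the metric dual $\alpha^i=\sum_j b_{ij}\theta_j$ and conclude from the horizontality of each $p^*\beta_j$. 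Your route has the advantage of yielding $T_i\xi^j=0$ for all $i,j$ (not just $i\neq j$) and of producing, as a byproduct, the explicit formula $g(T_iX,Y)=\pi\sum_j b_{ij}\,p^*\beta_j(X,Y)$, which is essentially the identity \eqref{extheta} the paper derives separately afterwards; the paper's route is more elementary in that it needs nothing beyond the Koszul-level consequences of $\xi^i$ being Killing. Only a cosmetic remark: in the displayed identity for the dual one-form you write $g(\nabla_X V,Y)$ where $V$ should be $\xi^i$, and you should state explicitly that $g(T_i\xi^j,Y)=0$ for all $Y$ forces $T_i\xi^j=0$ by nondegeneracy of $g$.
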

\begin{proof}
To prove that $\xi^s$ is a Killing vector field for $s=1,\ldots,r$ observe that
\begin{equation*}
L_{\xi^s}g = \sum_{i,j=1}^rb_{ij}\left(\left(L_{\xi^s}\theta_i\right)\otimes\theta_j + \theta_i\otimes\left(L_{\xi^s}\theta_j\right)\right).
\end{equation*}
Hence we only have to check that $L_{\xi^s}\theta_i=0$ for any $i,s=1,\ldots,r$. Using Cartan's magic formula for Lie derivative we have
\begin{equation*}
L_{\xi^s}\theta_i = d\left(\theta_i(\xi^s)\right) + \xi^s\righthalfcup d\theta_i
\end{equation*} 
and it is immediate that the first term is zero, since $\theta_i(\xi^s)=\delta_i^s$, where $\delta_i^s$ is the Kronecker delta. For the second term we have
\begin{equation}
d\theta_i(\xi^s,X) = \xi^s\left(\theta_i(X)\right) - X\left(\theta_i(\xi^s)\right)-\theta_i\left([\xi^s,X]\right),
\end{equation}
where $X$ is arbitrary. We will consider two cases, namely when $X$ is a horizontal or vertical vector field. In both cases the first two components vanish, hence we only have to look at the third. In the first case we notice that $[\xi^s,X]$ is a horizontal vector field, since $\xi^s$ is a fundamental vector field on $P$. This gives us the vanishing of $\xi^s\righthalfcup d\theta_i$ on horizontal vector fields. When $X$ is vertical we can take it to be just $\xi^k$ and we immediately see that $[\xi^s,\xi^k]=0$ since the fields $\xi^j$ come from the action of a torus on $P$.

For the second part of the lemma observe that $g(\xi^i,\xi^j)$ is constant. For any vector field $X$ this gives us
\begin{equation*}
0= Xg(\xi^i,\xi^j) = g(\nabla_X\xi^i,\xi^j)+g(\xi^i,\nabla_X\xi^j) = -g(X,\nabla_{\xi^j}\xi^i)-g(\nabla_{\xi^i}\xi^j,X). 
\end{equation*} 
Now, since $[\xi^i,\xi^j]=0$ we have $\nabla_{\xi^i}\xi^j=\nabla_{\xi^j}\xi^i$ which proves that $T_i\xi^j=0$.

Recall that for any Killing vector field we have
\begin{equation*}
L_{\xi}\nabla_XY = \nabla_{L_{\xi}X}Y + \nabla_X(L_{\xi}Y),
\end{equation*}
where $X$ and $Y$ are arbitrary vector fields. In our situation we have
\begin{equation*}
(L_{\xi^i}T_j)X = L_{\xi^i}(T_jX) - T_j(L_{\xi^i}X) = \nabla_{[\xi^i,X]}\xi^j + \nabla_X[\xi^i,\xi^j] - \nabla_{[\xi^i,X]}\xi^j = 0,
\end{equation*}
which ends the proof.
\end{proof}

Hence tensor fields $T_i$ are horizontal, i.e. for each $i$ there exists a tensor field $\tilde{T}_i$ on $M$ such that $p_*\circ T_i=\tilde{T}_i\circ p_*$.

We now compute the O'Neill tensors (\cite{oneill}) of the Riemannian submersion $p:P\rightarrow M$. 
\begin{prop}
The O'Neill tensor $T$ is zero. The O'Neill tensor $A$ is given by
\begin{equation}\label{tensorA}
A_EF = \sum_{i,j=1}^rb^{ij}\left(g(E,T_iF)\xi^j + g(\xi^i,F)T_jE\right),
\end{equation}
where $b^{ij}$ are the coefficients of the inverse matrix of $[b_{ij}]_{i,j=1}^r$ and $E,F\in\Gamma(TP)$.
\end{prop}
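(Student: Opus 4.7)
The plan is to work directly from O'Neill's definitions
\[
T_EF = \mathcal{H}\nabla_{\mathcal{V}E}\mathcal{V}F + \mathcal{V}\nabla_{\mathcal{V}E}\mathcal{H}F,\qquad A_EF = \mathcal{H}\nabla_{\mathcal{H}E}\mathcal{V}F + \mathcal{V}\nabla_{\mathcal{H}E}\mathcal{H}F,
\]
where $\mathcal{H},\mathcal{V}$ denote the horizontal and vertical projections. Since both are tensor fields I would check the claims on pairs built from the fundamental fields $\xi^i$ and basic horizontal fields.

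For $T = 0$, the preceding lemma already delivers $\nabla_{\xi^j}\xi^i = T_i\xi^j = 0$ for all $i,j$, so the fibres are totally geodesic and the piece $\mathcal{H}\nabla_{V}W$ with $V,W$ vertical vanishes. The remaining piece $\mathcal{V}\nabla_{\xi^i}X$ for horizontal $X$ I would handle via $\nabla_{\xi^i}X = T_iX + [\xi^i,X]$; the bracket is horizontal by the argument already used in the previous lemma, and $T_iX$ is horizontal because the Killing property combined with $T_i\xi^j = 0$ gives skew-adjointness, so $g(T_iX,\xi^j) = -g(X,T_i\xi^j) = 0$.

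For the formula for $A$, I would split $F = F_H + F_V$ so that $A_EF = \mathcal{V}\nabla_{E_H}F_H + \mathcal{H}\nabla_{E_H}F_V$, and treat the summands separately. Using the vertical frame, the projection onto vertical vectors is $\mathcal{V}W = \sum_{i,j}b^{ij}g(W,\xi^i)\xi^j$; applied to $\nabla_{E_H}F_H$, together with $g(\nabla_{E_H}F_H,\xi^i) = -g(F_H,T_iE_H) = g(E_H,T_iF_H)$ (using $g(F_H,\xi^i)=0$ and the skew-adjointness of $T_i$), this reproduces the first summand in \eqref{tensorA}. For the second summand I would expand $F_V = \sum_l a^l\xi^l$ with $a^l = \sum_k b^{lk}g(F,\xi^k)$, compute $\nabla_{E_H}F_V = \sum_l a^l T_l E_H + (\text{vertical})$, and discard the vertical remainder; relabelling indices and using $b^{ij} = b^{ji}$ converts what remains into the second summand of \eqref{tensorA}.

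The only genuine obstacle is the index bookkeeping: one must verify that the two candidate terms in \eqref{tensorA} are already of the correct horizontal/vertical type in $E$ and $F$ so that they match the O'Neill decomposition piece by piece. This is automatic once one notes $T_iF = T_iF_H$ (because $T_i\xi^j=0$) and $g(\xi^i,F) = g(\xi^i,F_V)$, which is why the single formula handles all four type combinations simultaneously.
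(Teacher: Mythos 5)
The paper states this proposition without any proof, so there is no argument of the author's to compare against; your direct verification from O'Neill's definitions is correct and is exactly the computation the paper leaves implicit. All the key steps check out: $\nabla_{\xi^i}\xi^j=0$ (which the lemma's proof actually yields for all $i,j$, not just $i\neq j$) gives totally geodesic fibres, horizontality of $T_iX$ and of $[\xi^i,X]$ kills the other half of $T$, and the vertical projection formula $\mathcal{V}W=\sum_{i,j}b^{ij}g(W,\xi^i)\xi^j$ together with the skew-adjointness of $T_i$ reproduces both summands of \eqref{tensorA}, consistently with how the formula is used later (e.g.\ $g(\xi^s,\nabla_{E_i}X)=-g(T_sE_i,X)$ and \eqref{ricV1}).
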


Observe that from the fact that $\theta_i(\xi^i) = 1$ for $E\in\Gamma(TP)$ we get that
\begin{equation*}
g(\xi^i,E) = \sum_{j=1}^rb_{ij}\theta_j(E)
\end{equation*}
hence
\begin{equation*}
\theta_j(E) = \sum_{i=1}^rb^{ji}g(\xi^i,E).
\end{equation*}
Taking the exterior differential we get
\begin{equation}\label{extheta}
d\theta_j(E,F) = 2\sum_{i=1}^r b^{ji}g(T_iE,F),
\end{equation}
where $E,F\in\Gamma(TP)$.

Using formulae from \cite{besse} Chapter 9 and the fact that the fibre of the Riemannian submersion $(p,P,M)$ is totally geodesic and flat, we see that the Ricci tensor on the total space of Riemannian submersion is given by
\begin{align}
\label{ricV}\mathrm{Ric}(U,V) &= \sum_{i=1}^mg(A_{E_i}U,A_{E_i}V),\\
\label{ricVH}\mathrm{Ric}(X,U) &= -\sum_{i=1}^m g\left((\nabla_{E_i}A)_{E_i}X,U\right),\\
\label{ricH}\mathrm{Ric}(X,Y) &= \mathrm{Ric}_M(X,Y) - 2\sum_{i=1}^m g(A_XE_i,A_YE_i).
\end{align}
Here $E_i$ is an element of the orthonormal basis of the horizontal distribution $\mathcal{H}$, $\mathrm{Ric}_M$ is a lift of the Ricci tensor of the base $(M,h)$, $X,Y$ are horizontal vector fields and $U,V$ any vertical vector fields. Using the formula \eqref{tensorA} for the O'Neill tensor $A$ we can compute all components of the Ricci tensor $\mathrm{Ric}$. We obtain
\begin{align}
\label{ricV1}\mathrm{Ric}(U,V) &= \sum_{i=1}^mg\left(\sum_{s,t=1}^rb^{st}g(\xi^s,U)T_tE_i,\sum_{k,l=1}^rb^{kl}g(\xi^k,V)T_lE_i\right),\\
\label{ricH1}\mathrm{Ric}(X,Y) &= \mathrm{Ric}_M(X,Y) - \frac{1}{2}\sum_{s,t=1}^rb^{st}g(T_sX,T_tY).
\end{align}
As for the value of $\mathrm{Ric}(X,U)$ we compute the covariant derivative
\begin{gather*}
\left(\nabla_{E_i}A\right)_{E_i}X = \nabla_{E_i}\left(\sum_{s,t=1}^rb^{st}g\left(E_i,T_sX\right)\xi^t\right)-\sum_{s,t=1}^rb^{st}g\left(\nabla_{E_i}E_i,T_sX\right)\xi^t\\
-\sum_{s,t=1}^rb^{st}\left(g\left(E_i,T_s\nabla_{E_i}X\right)\xi^t+g\left(\xi^s,\nabla_{E_i}X\right)T_tE_i\right)\\
=\sum_{s,t=1}^rb^{st}g\left(E_i,\left(\nabla_{E_i}T_s\right)X\right)\xi^t,
\end{gather*}
where we used the fact that $g(\xi^s,\nabla_{E_i}X)=-g(T_sE_i,X)$ which follows from $A_X$ being anti-symmetric with respect to $g$ for any horizontal vector field $X$. Now since tensors $T_s$ are anti-symmetric with respect to $g$ so is $\nabla_XT_s$, hence
\begin{equation*}
\left(\nabla_{E_i}A\right)_{E_i}X = -\sum_{s,t=1}^rb^{st}g\left(\left(\nabla_{E_i}T_s\right)E_i,X\right)\xi^t = \sum_{t=1}^r\delta d\theta_t(X)\xi^t.
\end{equation*}
As a result we have
\begin{equation*}
\mathrm{Ric}(X,U) = \sum_{t=1}^r\delta d\theta_t(X) g(\xi^t,U).
\end{equation*}

\section{Torus bundle over a product of almost Hodge manifolds}

Recall that an almost complex manifold is a pair $(M,J)$ where $M$ is a some differential manifold and $J$ is an endomorphism of $TM$ such that $J^2=-\mathrm{id}_{TM}$. On such manifolds we can single out particular Riemannian metrics which we call compatible with the almost complex structure $J$. The compatibility condition for a metric $g$ is $g(JX,JY)=g(X,Y)$. We call such metrics almost Hermitian and the triple $(M,g,J)$ an almost Hermitian manifold. We can define a differential $2$-form $\omega$ by $\omega(X,Y) = g(JX,Y)$. We call $\omega$ a K\"ahler form of $(M,g,J)$. If the complex structure $J$ is integrable and the K\"ahler form is closed we call such complex manifold a K\"ahler manifold. Such manifolds are of no use in this work, since by a result of Sekigawa and Vanhecke (\cite{sv}) every K\"ahler $\mathcal{A}$-manifold has parallel Ricci tensor.

There are however manifolds very close to being K\"ahler which are more suitable for us. Let $(M,g,J)$ be an almost complex manifold with closed K\"ahler form. We call such manifolds \textit{almost K\"ahler manifolds}. If the almost complex structure $J$ is not integrable then $(M,g,J)$ is sometimes called a \textit{strictly almost K\"ahler manifold}. In addition to being closed the K\"ahler form of an almost K\"ahler manifold is also coclosed, hence harmonic with respect to $g$.

In \cite{j2} Jelonek constructed a strictly almost K\"ahler $\mathcal{A}$-manifold with non-parallel Ricci tensor. Moreover the K\"ahler form of such a manifold has a useful property. It is a constant multiple of some differential $2$-form that belongs to an integral cohomology class i.e. a differential form in $H^2(M;\mathbb{Z})$. An almost K\"ahler manifold whose K\"ahler form satisfies this condition is called an almost Hodge manifold.

Returning to our construction suppose that $(M_i,g_i,J_i)$, $i=1,\ldots, n$ are almost Hodge manifolds such that K\"ahler forms $\omega_i$ are constant multiples of $2$-forms $\alpha_i$ and their cohomology classes are integral, i.e. $[\alpha_i]\in H^2(M_i;\mathbb{Z})$. Denote by $(M,g,J)$ the product manifold with the product metric and product almost complex structure and let $pr_i$ be the projection on the $i$-th factor. From our earlier discussion we know that there exists a principal $r$-torus bundle classified by the forms $\beta_1,\ldots, \beta_r$ given by
\begin{equation*}
\beta_j = \sum_{i=1}^n a_{ji}pr_i^*\alpha_i,
\end{equation*}
where $[a_{ji}]$ is some $r\times n$ matrix with integer coefficients. By \eqref{cxform} the coefficients $\theta_j$ of the connection form of $(p,P,M)$ satisfy
\begin{equation*}
d\theta_j = 2\pi p^*\beta_j=2\pi \sum_{i=1}^n a_{ji}p^*\left(pr_i^*\alpha_i\right)
\end{equation*}
for every $j=1,\ldots, r$. Since $\alpha_i$'s and K\"ahler forms $\omega_i$ of $(M_i,g_i,J_i)$ are connected by $\omega_i=c_i\alpha_i$ for some constants $c_i$, $i=1,\ldots,n$ we have
\begin{equation}\label{cxform-Kf}
d\theta_j = 2\pi\sum_{i=1}^n \frac{a_{ji}}{c_i}\omega_i^*,
\end{equation}
where by $\omega_i^*$ we denote the $2$-form obtained from lifting $\omega_i$ to $P$. Comparing this with \eqref{extheta} we get a formula for each tensor field $\tilde{T}_i$
\begin{equation}\label{tensorT}
\tilde{T}_iX = \pi \sum_{j=1}^rb_{ij}\sum_{k=1}^n \frac{a_{jk}}{c_k}J_k^*X
\end{equation}
where $J_k^*$ is the almost complex structure tensor of $(M_k,g_k,J_k)$ lifted to the product manifold $M$.

We will now compute the Ricci tensor of $(P,g)$ using \eqref{ricV}-\eqref{ricH}, computations that follows those formulas and above observations. We begin with
\begin{gather}\label{ricV2}
\mathrm{Ric}(U,V) = \pi^2\sum_{i=1}^mh\left(\sum_{s=1}^rg(\xi^s,U)\sum_{k=1}^n\frac{a_{sk}}{c_k}J^*_kE_i,\sum_{l=1}^rg(\xi^l,U)\sum_{h=1}^n\frac{a_{lh}}{c_h}J^*_hE_i\right)\\
\nonumber=\pi^2\sum_{s,l=1}^rg(\xi^s,U)g(\xi^l,V)\sum_{i=1}^mh\left(\sum_{k=1}^n\frac{a_{sk}}{c_k}J^*_kE_i,\sum_{h=1}^n\frac{a_{lh}}{c_h}J^*_hE_i\right)\\
\nonumber=\pi^2\sum_{s,l=1}^rg(\xi^s,U)g(\xi^l,V)\sum_{i=1}^m\sum_{k=1}^ng_k\left(\frac{a_{sk}}{c_k}J_kE_i,\frac{a_{lk}}{c_k}J_kE_i\right).
\end{gather}
We used the fact that for $k\neq h$ images of $J_k$ and $J_h$ are orthogonal. It is easy to see that
\begin{equation*}
\sum_{i=1}^m\sum_{k=1}^ng_k\left(\frac{a_{sk}}{c_k}J_kE_i,\frac{a_{lk}}{c_k}J_kE_i\right)
\end{equation*}
are constants for each $s,l=1,\ldots,r$. Hence the Ricci tensor of $(P,g)$ on vertical vector fields is a symmetrized product of Killing vector fields.

Next, since the K\"ahler form of each almost Hodge manifold $(M_k,g_k,J_k)$ is co-closed we see from \eqref{cxform-Kf} that
\begin{equation}\label{ricVH2}
\mathrm{Ric}(X,U) = 0
\end{equation}
for any horizontal vector field $X$ and vertical vector field $U$.

The last component of the Ricci tensor of $(P,g)$ is the horizontal one. First observe that $\mathrm{Ric}_M$ is the Ricci tensor of the product metric $h=g_1+\ldots+g_n$ and Ricci tensors $\mathrm{Ric}_k$ are $J_k$-invariant Killing tensors. We have
\begin{thm}\label{liftKill}
Let $K_i$ be a Killing tensor on $(M_i,g_i,J_i)$ for $i=1,\ldots,n$. Then the lift $K^*$ of $K=K_1+\ldots+K_n$ to $P$ is a Killing tensor iff each $K_i$ is $J_i$-invariant.
\end{thm}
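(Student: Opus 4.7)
The plan is to verify the cyclic Killing equation $\mathcal{C}_{X,Y,Z}(\nabla_X K^*)(Y,Z)=0$ on $(P,g)$ by splitting into cases according to how many of $X,Y,Z$ are basic horizontal (lifts of vector fields on $M$) versus fundamental vertical $\xi^a$. The computational tools are already in place: $\nabla_X\xi^a = T_aX$, $\nabla_{\xi^a}\xi^b = 0$, $[\xi^a,X]=0$ for basic $X$ (hence $\nabla_{\xi^a}X = T_aX$), and the explicit form \eqref{tensorT} of $\tilde{T}_a$. Crucially, $K^*$ is the horizontal lift of $K$, so $K^*(V,\cdot)=0$ whenever $V$ is vertical.

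Three of the four cases collapse immediately. With three vertical arguments every $K^*$ vanishes and $\nabla_{\xi^a}\xi^b=0$. With one basic horizontal and two vertical arguments, every surviving term has a vertical slot inside $K^*$. With three basic horizontal $X,Y,Z$, the standard Riemannian-submersion identity that the horizontal part of $\nabla^P_X Y$ is the lift of $\nabla^M_{p_*X}p_*Y$, together with $K^*$ killing the vertical remainder, yields $(\nabla^P_X K^*)(Y,Z)=(\nabla^M_{p_*X}K)(p_*Y,p_*Z)\circ p$; the cyclic sum then vanishes because $K=K_1+\ldots+K_n$ is a Killing tensor on the Riemannian product $M$ (Proposition \ref{killprod}, applied inductively with each associated $1$-form equal to zero).

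The substantive case is two basic horizontal $X,Y$ and one vertical $\xi^a$. Expanding each $(\nabla_\bullet K^*)(\bullet,\bullet)$ with the identities above and using the symmetry of $K$ compresses the cyclic sum to
\[
-2\bigl[K(\tilde{T}_a p_*X,\,p_*Y)+K(p_*X,\,\tilde{T}_a p_*Y)\bigr].
\]
Substituting \eqref{tensorT} and exploiting the Riemannian-product structure of the base --- $J_k$ acts trivially on $TM_\ell$ for $\ell\neq k$, and $K_i$ pairs only vectors both tangent to $M_i$, so that $K(J_k X',Y')=K_k(J_k X'_k,Y'_k)$ when $X'=\sum_\ell X'_\ell$ with $X'_\ell\in TM_\ell$ --- reduces this to
\[
-2\pi\sum_{k=1}^{n}\Bigl(\sum_{j=1}^{r}b_{aj}\tfrac{a_{jk}}{c_k}\Bigr)\bigl[K_k(J_k X'_k,Y'_k)+K_k(X'_k,J_k Y'_k)\bigr].
\]

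Since $J_i$-invariance $K_i(J_iX,J_iY)=K_i(X,Y)$ is equivalent, via $J_i^2=-\mathrm{id}$ and symmetry of $K_i$, to the antisymmetry $K_i(J_iX,Y)+K_i(X,J_iY)=0$, each bracket above vanishes under the hypothesis, proving sufficiency. For necessity, test the displayed sum with $X',Y'$ supported in a single factor $TM_k$; since $[b_{aj}]$ is positive definite and each column of $[a_{jk}]$ is assumed nontrivial (otherwise the $k$-th factor decouples and the statement for $K_k$ is vacuous), one can pick $a$ with $\sum_j b_{aj}a_{jk}/c_k\neq 0$, forcing $J_k$-invariance of $K_k$. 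The main obstacle will be the careful bookkeeping in this two-horizontal/one-vertical case --- isolating $\tilde{T}_a$ and seeing that the product structure cleanly separates the $J$-invariance condition factor by factor; the remaining cases are essentially mechanical.
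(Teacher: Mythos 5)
Your proof is correct and follows essentially the same route as the paper's: the same four-case split according to how many arguments are vertical, the same reduction of the mixed case to $-2\bigl(K(\tilde{T}_aX,Y)+K(X,\tilde{T}_aY)\bigr)$ via the formula \eqref{tensorT}, and the same use of the equivalence between $J_k$-invariance of $K_k$ and the antisymmetry $K_k(J_kX,Y)+K_k(X,J_kY)=0$. Your explicit argument for the necessity direction (testing with $X,Y$ supported in a single factor and choosing $a$ with $\sum_j b_{aj}a_{jk}/c_k\neq 0$) is a welcome detail that the paper's proof leaves implicit.
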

\begin{proof}
We need to check the cyclic sum condition \eqref{cycAwar} for different choices of vector fields. It is easy to see that if all three vector fields are vertical then each component of the cyclic sum vanishes, since $K^*$ is non-vanishing only on horizontal vector fields. If only two of the vector fields are vertical then again all components vanish, since $\nabla_{\xi^i}\xi^j = 0$. For three horizontal vector fields we again see that the cyclic sum vanish, since the covariant derivative of $K^*$ with respect to metric $g$ on $P$ is the same as that of $K$ with respect to the product metric $h$ on $M$. By Proposition \ref{killprod} $K$ is a Killing tensor for $(M,h)$. The remaining case is when only one vector field is vertical. Let us put $Z=\xi^i$ and $X,Y$ be basic horizontal vector fields. We compute
\begin{align*}
\nabla_{\xi^i}K^*(X,Y) &= -K^*(\nabla_{\xi^i}X,Y) - K^*(X,\nabla_{\xi^i}Y) = -K^*(A_X\xi^i,Y)-K^*(X,A_Y\xi^i) \\
&= -K^*(\nabla_X\xi^i,Y) - K^*(X,\nabla_Y\xi^i),
\end{align*}
where the before last equality is due to the fact that $X$ and $Y$ are basic (see \cite{oneill}) and the last one follows from the definition of the O'Neill tensor $A$. Next we have
\begin{equation*}
\nabla_XK^*(\xi^i,Y) = -K^*(\nabla_X\xi^i,Y).
\end{equation*} 
Summing up we have
\begin{align*}
\mathcal{C}_{\xi^i,X,Y}\nabla_{\xi^i}K^*(X,Y) &= -2\left(K^*(\nabla_X\xi^i,Y) + K^*(X,\nabla_Y\xi^i)\right)\\
&= -2\left(K(\tilde{T}_iX,Y)+K(X,\tilde{T}_iY)\right).
\end{align*}
Now we use the formula \eqref{tensorT} for the tensor $\tilde{T}_i$
\begin{equation*}
\mathcal{C}_{\xi^i,X,Y}\nabla_{\xi^i}K^*(X,Y) = -2\pi\sum_{j=1}^rb_{ij}\sum_{k=1}^n \frac{a_{jk}}{c_k}\left(K(J_k^*X,Y) + K(X,J_k^*Y)\right).
\end{equation*}
Since each $J_i$ projects vector fields on $TM_k$ we see from the definition of $K$ that
\begin{equation*}
K(J_k^*X,Y) + K(X,J_k^*Y) = K_k(J_kX,Y) + K_k(X,J_kY).
\end{equation*}
By $J_k$-invariance of $K_k$ for $k=1,\ldots, n$ we have completed the proof.
\end{proof}

\begin{rem}
It is worth noting, that we cannot lift in that way a conformal Killing tensor with non-vanishing $P$. In fact taking three vertical vector fields we see that $P$ vanishes on vertical distribution. On the other hand for two vertical vector fields $U,V$ and one horizontal vector field $X$ the left-hand side of \eqref{cycAwar} vanish and the right-hand side reads $P(X)g(U,V)$, hence $P$ has to vanish also on the horizontal distribution.
\end{rem}

\begin{cor}
An $r$-torus bundle with metric defined by \eqref{metricg} can not be an $\mathcal{AC}^{\perp}$\textit{-manifold}. Especially there are no $\mathcal{AC}^{\perp}$ structures on K-contact and Sasakian manifolds.
\end{cor}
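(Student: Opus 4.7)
The plan is to show that the conformal Killing condition \eqref{Awar}--\eqref{cycAwar} applied to the Ricci tensor of $(P,g)$ forces the associated $1$-form (which I will also call $P$, following the paper) to vanish identically, so no $\mathcal{AC}^{\perp}$-structure that is not already $\mathcal{A}$ can exist. This is in the spirit of the preceding remark, but the argument is now directed at the intrinsic Ricci tensor on the total space rather than at a lifted tensor. The three ingredients I will use repeatedly are: $\mathrm{Ric}(\xi^i,\xi^j)$ is constant (a consequence of \eqref{ricV2}), $\mathrm{Ric}(\xi^i,X)=0$ for horizontal $X$ (equation \eqref{ricVH2}), and the horizontality of $\nabla_{\xi^i}X=T_iX$ for basic horizontal $X$.

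First I would kill $P$ on vertical vectors. Substituting $X=\xi^i$ into \eqref{Awar} gives $(\nabla_{\xi^i}\mathrm{Ric})(\xi^i,\xi^i)=P(\xi^i)b_{ii}$. The left-hand side expands as $\xi^i\!\bigl(\mathrm{Ric}(\xi^i,\xi^i)\bigr)-2\mathrm{Ric}(\nabla_{\xi^i}\xi^i,\xi^i)$, and both terms vanish since $\mathrm{Ric}(\xi^i,\xi^i)$ is constant and $\nabla_{\xi^i}\xi^i=T_i\xi^i=0$. Positive definiteness of $[b_{ij}]$ then forces $P(\xi^i)=0$ for every $i$.

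Next I would kill $P$ on horizontal vectors by applying the cyclic form \eqref{cycAwar} to $\xi^i,\xi^j$ and a basic horizontal $X$. The first two summands on the left are derivatives of $\mathrm{Ric}(\xi^a,X)\equiv 0$; their Leibniz corrections $\mathrm{Ric}(\nabla_{\xi^a}\xi^b,X)$ and $\mathrm{Ric}(\xi^b,T_aX)$ both vanish (the latter because $T_aX$ is horizontal and orthogonal to $\xi^b$). The third summand $(\nabla_X\mathrm{Ric})(\xi^i,\xi^j)$ vanishes by constancy of $\mathrm{Ric}(\xi^i,\xi^j)$ together with $\nabla_X\xi^a=T_aX$ being horizontal, hence $g$-orthogonal to every $\xi^b$. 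The right-hand side collapses to $P(X)b_{ij}$, so taking $i=j$ yields $P(X)=0$ for every horizontal $X$.

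Combining the two steps, $P\equiv 0$, so the Ricci tensor is Killing whenever it is conformal Killing, which is the content of the corollary; the K-contact and Sasakian assertions then drop out as the $r=1$ instance of the construction. I do not anticipate any genuine obstacle: the entire argument is careful bookkeeping of Leibniz expansions, and every correction term is immediately killed by one of the three structural facts flagged in the first paragraph.
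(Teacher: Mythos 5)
Your argument is correct and takes essentially the same route as the paper, which proves the corollary by the strategy of the preceding Remark: evaluate the conformal-Killing identity \eqref{cycAwar} on purely vertical arguments to force $P$ to vanish on $\mathcal{V}$, then on two vertical and one horizontal argument to force $P$ to vanish on $\mathcal{H}$. The only (welcome) difference is that the Remark is phrased for a lifted tensor $K^*$ vanishing on vertical vectors, whereas you run the identity directly on $\mathrm{Ric}$, which has a nonzero vertical part \eqref{ricV2}; your substitute justifications --- constancy of $\mathrm{Ric}(\xi^i,\xi^j)$, the vanishing $\mathrm{Ric}(\xi^i,X)=0$ of \eqref{ricVH2} (note this, rather than mere $g$-orthogonality of $T_aX$ and $\xi^b$, is what kills the Leibniz corrections), and $T_i\xi^j=0$ --- are exactly the facts the paper has established for this purpose.
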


Next we show that the second component of the horizontal part of the Ricci tensor \eqref{ricH1} is just a sum of lifts of metrics $g_k$, $k=1,\ldots,n$.
\begin{equation*}
\sum_{s,t=1}^rb^{st}g(T_sX,T_tY) = \pi^2\sum_{s,t=1}^rh\left(\sum_{j=1}^rb_{sj}\sum_{k=1}^n\frac{a_{jk}}{c_k}J_k^*X,\sum_{i=1}^rb_{ti}\sum_{l=1}^n\frac{a_{il}}{c_l}J_l^*Y\right).
\end{equation*}
Since $J_k$ and $J_l$ are orthogonal for different $k,l=1,\ldots,n$ we obtain
\begin{gather}\label{ricH2}
\sum_{s,t=1}^rb^{st}g(T_sX,T_tY) = \pi^2\sum_{s,t=1}^r\sum_{k=1}^nh\left(\sum_{j=1}^rb_{sj}\frac{a_{jk}}{c_k}J_k^*X,\sum_{i=1}^rb_{ti}\frac{a_{ik}}{c_k}J_k^*Y\right)\\
\nonumber=\pi^2\sum_{j,l=1}^rb_{jl}\sum_{k=1}^r\frac{a_{jk}a_{lk}}{c_k^2}g_k(X,Y).
\end{gather}
From the above Theorem we infer that, since a Riemannian metric is a Killing tensor and each $g_k$ is $J_k$-invariant, the tensor field $K(X,Y) = \sum_{s,t=1}^rb^{st}g(T_sX,T_tY)$ is a Killing tensor field.

Now we can prove the following theorem
\begin{thm}
Let $P$ be a $r$-torus bundle over a Riemannian product $(M,h)$ of almost Hodge $\mathcal{A}$-manifolds $(M_k,g_k,J_k)$, $k=1,\dots n$ with metric $g$ defined by \eqref{metricg}. Then $(P,g)$ is itself an $\mathcal{A}$-manifold.
\end{thm}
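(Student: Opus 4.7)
My plan is to exploit the decomposition of $\mathrm{Ric}$ on $P$ into a vertical–vertical part $\mathrm{Ric}^{vv}$, a horizontal–horizontal part $\mathrm{Ric}^{hh}$, and a mixed part. The mixed component vanishes by \eqref{ricVH2}, so it will suffice to verify that $\mathrm{Ric}^{vv}$ and $\mathrm{Ric}^{hh}$ are each Killing tensors on $P$; by linearity of the cyclic-parallel condition their sum will then be Killing as well. The two machines already at hand are Theorem \ref{confKill}, which manufactures Killing tensors from Killing $1$-forms, and Theorem \ref{liftKill}, which lifts $J_k$-invariant Killing tensors from the base factors to $P$.

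For the vertical–vertical part I would apply Theorem \ref{confKill}. Formula \eqref{ricV2} writes $\mathrm{Ric}^{vv}$ as $\sum_{s,l=1}^{r}c_{sl}\,\eta^s(U)\eta^l(V)$, where $\eta^s:=g(\xi^s,\cdot)$ and the constant matrix $c_{sl}$ is manifestly symmetric in $s,l$ from the explicit expression. Since each $\xi^s$ was already shown to be Killing, each dual $1$-form $\eta^s$ is co-closed and satisfies the conformal Killing equation \eqref{confK} with $p=1$. Applying Theorem \ref{confKill} to any pair $(\eta^s,\eta^l)$ produces a conformal Killing tensor $K_{\eta^s,\eta^l}$; because both $\delta\eta^s$ and $\delta\eta^l$ vanish, the right-hand side of the final formula in the proof of that theorem is zero and $K_{\eta^s,\eta^l}$ is in fact a Killing tensor. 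A straightforward symmetry rearrangement then yields $\mathrm{Ric}^{vv}=\tfrac12\sum_{s,l}c_{sl}K_{\eta^s,\eta^l}$, a finite linear combination of Killing tensors, hence Killing.

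The horizontal–horizontal part is read off from \eqref{ricH1}. Its first summand is the lift of $\mathrm{Ric}_M=\mathrm{Ric}_1+\cdots+\mathrm{Ric}_n$; by hypothesis each $\mathrm{Ric}_k$ is a $J_k$-invariant Killing tensor on the almost Hodge $\mathcal{A}$-manifold $(M_k,g_k,J_k)$, so Theorem \ref{liftKill} makes its lift a Killing tensor on $P$. Its second summand, by the reduction carried out in \eqref{ricH2}, is a constant linear combination of lifts of the base metrics $g_k$, and each $g_k$ is both $J_k$-invariant and a Killing tensor (being parallel), so Theorem \ref{liftKill} applies once more. Summing, $\mathrm{Ric}^{hh}$ is Killing, and hence so is $\mathrm{Ric}$, making $(P,g)$ an $\mathcal{A}$-manifold. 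I do not expect any real obstacle beyond the bookkeeping needed to recognise $\mathrm{Ric}^{vv}$ as a symmetrised combination of products of dual $1$-forms to Killing vector fields; all of the substantive geometric content has already been packaged into Theorems \ref{confKill} and \ref{liftKill} and into the Ricci computations \eqref{ricV2}–\eqref{ricH2}.
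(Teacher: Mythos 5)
Your proposal is correct and follows essentially the same route as the paper: the mixed part vanishes by \eqref{ricVH2}, the vertical--vertical part is handled by Theorem \ref{confKill} applied to the (co-closed) dual $1$-forms of the Killing fields $\xi^s$, and the horizontal--horizontal part is handled by lifting the $J_k$-invariant Killing tensors $\mathrm{Ric}_k$ and $g_k$ via Theorem \ref{liftKill}. Your write-up merely makes explicit the bookkeeping (symmetry of the constant matrix, vanishing of $\delta\eta^s$) that the paper leaves implicit.
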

\begin{proof}
Since distributions $\mathcal{H}$ and $\mathcal{V}$ are orthogonal with respect to the Ricci tensor $\mathrm{Ric}$ of $(P,g)$ by \eqref{ricVH2} we can write it as 
\begin{gather*}
\mathrm{Ric}(E,F) = \pi^2\sum_{s,l=1}^rg(\xi^s,E)g(\xi^l,F)\sum_{i=1}^m\sum_{k=1}^ng_k\left(\frac{a_{sk}}{c_k}J_kE_i,\frac{a_{lk}}{c_k}J_kE_i\right)\\
 +\mathrm{Ric}_M(E,F) -\frac{1}{2}\pi^2\sum_{j,l=1}^rb_{jl}\sum_{k=1}^r\frac{a_{jk}a_{lk}}{c_k^2}g_k(E,F)
\end{gather*}
using \eqref{ricH2} and \eqref{ricV2}.The first component is a Killing tensor as a symmetrized product of Killing vector fields by Theorem \ref{confKill}. The second and third components are Killing tensors by Theorem \ref{liftKill}. Since a sum of Killing tensors with constant coefficients is again a Killing tensor we have proved the theorem.
\end{proof}

\begin{rem}
Observe that if at least one of the manifolds $(M_k,g_k)$ has non-parallel Ricci tensor, then the Ricci tensor $\mathrm{Ric}$ of $(P,g)$ is also non-parallel with respect to the metric $g$. Thus we have constructed a large number of strict $\mathcal{A}$-manifolds.
\end{rem}


\begin{thebibliography}{99}
\bibitem[Be]{besse} A. Besse, \textit{Einstein Manifolds}, Springer-Verlag, Berlin, Heidelberg (1987).
\bibitem[Gra]{gray} A. Gray, \textit{Einstein-like manifolds which are not Einstein}, Geom. Dedicata 7 (1978), 259--280.
\bibitem[Jel1]{j1} W. Jelonek, \textit{On $\mathcal{A}$-tensors in Riemannian geometry}, preprint PAN, 551, 1995.
\bibitem[Jel2]{j3} W. Jelonek, \textit{K-contact $\mathcal{A}$-manifolds}, Colloquium Mathematicum (1) 75 (1998), 97--103.
\bibitem[Jel3]{j2} W. Jelonek, \textit{Almost K\"ahler $\mathcal{A}$-structures on twistor bundles}, Ann. Glob. Anal. Geom. 17 (1999), 329--339.
\bibitem[Ko]{koba} S. Kobayashi, \textit{Principal fibre bundles with the $1$-dimensional toroidal group},  Tohoku Math. J. 8 (1956), 29--45.
\bibitem[S-M]{s-m} A. Moroianu, U. Semmelmann, \textit{Twistor forms on K\"ahler manifolds}, Ann. Sc. Norm. Super. Pisa Cl. Sci. 2 (2003), 823--845
\bibitem[ON]{oneill} B. O'Neill, \textit{The fundamental equations of a submersion}, Michigan Math. J. 13 (1966), 459--469.
\bibitem[PT]{pet} H. Pedersen, P. Todd, \textit{The Ledger curvature conditions and D'Atri geometry}, Differential Geom. Appl. 11 (1999), 155--162.
\bibitem[S-V]{sv} K. Sekigawa, L. Vanhecke, \textit{Symplectic geodesic symmetries on K\"ahler manifolds}, Quart. J. Math. Oxford Ser. (2) 37 (1986), 95--103.
\bibitem[Sem]{sem} U. Semmelmann, \textit{Conformal Killing forms on Riemannian manifolds}, arXiv:math/0206117.
\bibitem[W-Z]{wang} M. Y. Wang, W. Ziller, \textit{Einstein metrics on torus bundles}, J. Differential Geom. 31 (1990), 215--248.
\end{thebibliography}
\end{document}